\theoremstyle{plain}
\newtheorem{thm}{Theorem}[section]
\newtheorem{lemma}[thm]{Lemma}
\newtheorem{THM}{Theorem}
\newtheorem{COR}[THM]{Corollary}
\theoremstyle{remark}
\newtheorem{ex}[thm]{Example}
\newtheorem{rem}[thm]{Remark}
\newcommand{\mb}{\mathbb}
\newcommand{\mc}{\mathcal}
\newcommand{\R}{\mb R}
\newcommand{\C}{\mb C}
\newcommand{\Pj}{\mb P}
\newcommand{\Z}{\mb Z}
\newcommand{\Q}{\mb Q}
\newcommand{\inv}{^{-1}}
\newcommand{\rato}{\dashrightarrow}
\DeclareMathOperator{\re}{Re}
\DeclareMathOperator{\Aut}{Aut}
\DeclareMathOperator{\Bir}{Bir}
\DeclareMathOperator{\Pic}{Pic}
\DeclareMathOperator{\PGL}{PGL}
\DeclareMathOperator{\id}{id}
\numberwithin{equation}{section}
\numberwithin{equation}{section}       
\title[Application of $p$-adic integration to transformations preserving a fibration]{An application of $p$-adic integration to the dynamics of a birational transformation preserving a fibration }
\begin{document}

\date{}
\author{Federico Lo Bianco}
\address{InterDigital France,\\  845a Av. des Champs Blancs, \\35510 Cesson-Sévigné (France)}
\email{federico.lobianco@interdigital.com}

\maketitle

\begin{abstract}
Let $f\colon X \rato X$ be a birational transformation of a projective manifold $X$ whose Kodaira dimension $\kappa(X)$ is non-negative. We show that, if there exist a dominant rational map $\pi \colon X\rato B$ and a birational transformation $f_B\colon B\rato B$ which preserves a big line bundle $L\in \Pic(B)$ and such that $f_B\circ \pi=\pi\circ f$, then $f_B$ has finite order.\\
As a corollary we show that, for projective irreducible symplectic manifolds of type $K3^{[n]}$ or generalized Kummer, the first dynamical degree characterizes the birational transformations admitting a Zariski-dense orbit.
\end{abstract}

\section{Introduction}
\label{intro}

Let $f\colon X\rato X$ be a birational transformation of a complex projective manifold. A natural question when studying the dynamical properties of $f$ is the existence of an equivariant rational fibration, i.e. of a dominant rational map with connected fibres $\pi\colon X\rato B$ onto a projective manifold and of a birational transformation $f_B\colon B\rato B$ such that the following diagram commutes:
$$
\begin{tikzcd}
M \arrow[dashrightarrow]{r}{f} \arrow[dashrightarrow,swap]{d}{\pi} & M \arrow[dashrightarrow]{d}{\pi} \\
B \arrow[dashrightarrow]{r}{f_B}& B
\end{tikzcd}.
$$
The transformation $f$ is called \emph{imprimitive} (see \cite{MR3431659}) if there exists a non-trivial $f$-equivariant fibration (i.e. such that $0<\dim B<\dim X$), and primitive otherwise; the study of imprimitive birational transformations should intuitively be simpler than primitive ones, as their dynamics decomposes into smaller dimensional dynamical systems: the base and the fibres. The goal of the present paper is to study the action on the base induced by an imprimitive transformation. When $\pi$ is (birationally equivalent to) the canonical fibration of $X$, some finite index subgroup of $\Bir(X)$ acts as the identity on $B$; this is a consequence of the finiteness of the pluricanincal representations, see \cite{MR0506253}. Our main result is the following generalization:

\begin{THM}
\label{main result}
Let $X$ be a complex projective manifold and let $f\colon X\rato X$ be a birational transformation. Suppose that there exist a dominant rational map $\pi\colon X\rato B$ onto a projective manifold $B$ and a birational transformation $f_B\colon B \rato B$ such that $f_B\circ \pi=\pi\circ f$. Assume that 
\begin{enumerate}
\item the Kodaira dimension $\kappa (X)$ of $X$ is non-negative;
\item $f_B$ preserves a big line bundle $L$.
\end{enumerate}
Then $f_B$ has finite order.
\end{THM}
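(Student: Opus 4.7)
The plan is to reduce the theorem, by exploiting the bigness of $L$, to a finite-dimensional linear algebra question about the action of $f_B$ on $H^0(B, L^{\otimes k})$, and then to rule out infinite-order actions by combining the Kodaira dimension hypothesis with a $p$-adic interpolation argument in the spirit of Bell--Ghioca--Tucker.

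I would first analyze $f_B^*$ on $N^1(B)$. Since $f_B$ is a pseudo-automorphism preserving the big class $[L]$ in the open big cone, a Birkhoff-type convexity argument applied to both $f_B^*$ and $(f_B^{-1})^*$ forces all eigenvalues of $f_B^*|_{N^1(B)}$ to have modulus $1$. Integrality of the action then lets Kronecker's theorem promote these eigenvalues to roots of unity, so after an iterate $f_B^*$ acts trivially on $\text{NS}(B)$; a further iterate handles the residual $\Pic^0(B)$ ambiguity and gives $f_B^* L \cong L$ in $\Pic(B)$. Bigness of $L$ then supplies, for $k \gg 0$, a birational map $\phi_k \colon B \rato Z_k \subset \Pj(V_k^*)$ with $V_k := H^0(B, L^{\otimes k})$, which is $f_B$-equivariant, with $f_B$ acting on $Z_k$ via the projective transformation $[M_k] \in \PGL(V_k)$ induced by the linear action $M_k$ of $f_B$ on $V_k$. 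The theorem thus reduces to showing that $[M_k]$ has finite order, and (after another iterate to make the eigenvalues of $M_k$ close to $1$ in a fixed $p$-adic sense) ultimately to showing that $M_k$ is a scalar.

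The Kodaira dimension hypothesis and the $p$-adic integration enter here, and this step is the main obstacle. The condition $\kappa(X) \ge 0$ produces a non-zero pluricanonical section $\omega \in H^0(X, K_X^{\otimes m})$, and since $f$ acts by an integer matrix on the finite-dimensional space where $[\omega]$ lives, an iterate of $f$ fixes $\omega$ exactly: $f^*\omega = \omega$. Spread $(X, B, f, f_B, \pi, L, \omega)$ over a finitely generated subring $R \subset \C$ and pass to a $p$-adic completion $\C_p$ at a place of good reduction; the Batyrev-type volume form associated to $|\omega|_p^{2/m}$ defines an $f$-invariant $p$-adic measure $\mu_X$ on $X(\C_p)$, and its push-forward along $\pi$ is an $f_B$-invariant $p$-adic measure $\mu$ on $B(\C_p)$. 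By the Bell--Ghioca--Tucker theorem, after a further iterate the $f_B$-action on a $p$-adic open $U \subset B(\C_p)$ is the time-$1$ map of a $p$-adic analytic flow $\Phi_t$, $t \in \Z_p$, generated by a $p$-adic vector field $\xi$. If $M_k$ is not a scalar, then $\xi$ does not vanish identically, and the bigness of $L$ (which gives sections of $L^{\otimes k}$ separating tangent directions on $Z_k$), combined with the $\Phi$-invariance of $\mu$, should yield the contradictory conclusion that some $p$-adic analytic function of $t$ obtained by integration against $\mu$ is simultaneously constant and non-constant. The principal technical hurdle will be to set up this integration concretely---ensuring that the push-forward measure $\mu$ has positive mass on neighborhoods where the bigness of $L$ is visible, and carefully handling the base locus of $L^{\otimes k}$---so that one truly concludes $\xi = 0$ and hence $f_B^N = \mathrm{id}$.
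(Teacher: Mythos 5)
Your reduction to a projective--linear action is sound and matches the paper: the Iitaka map of $hL$ is $f_B$-equivariant and birational onto its image, so everything comes down to showing that the induced $g=[M_k]\in\PGL(V_k)$ has finite order (the preliminary paragraph about $N^1(B)$ and $\Pic^0(B)$ is unnecessary, since hypothesis (2) already gives $f_B^*L\cong L$, and the claim that an iterate kills the $\Pic^0$ ambiguity would anyway be unjustified). But the core step of your plan has a genuine gap. The enemy is a diagonalizable $g$ whose eigenvalues have complex modulus $1$ without being roots of unity (e.g.\ $\alpha=(3+4i)/5$, as in Remark \ref{if algebraic integer}). At a place of \emph{good reduction}, which is exactly where you propose to work, such eigenvalues are $p$-adic units, $g$ acts as an isometry of the relevant polydiscs, and after an iterate it does embed in a $p$-adic analytic flow $\Phi_t$ with non-vanishing generator --- but a non-trivial analytic flow can perfectly well preserve a finite measure of full support (think of a ``rotation'' $\mathrm{diag}(\alpha_i^t)$ with $|\alpha_i|_p=1$ preserving the Haar-type measure on the unit polydisc). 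So invariance of the pushed-forward measure plus non-vanishing of the vector field yields no contradiction there, and the appeal to bigness ``separating tangent directions'' does not supply one: the sentence ``should yield the contradictory conclusion that some $p$-adic analytic function of $t$ is simultaneously constant and non-constant'' is precisely the statement that remains unproved, and at a place of good reduction it is false as a mechanism. The missing idea is the paper's Key Lemma (Tits): since the eigenvalues live in a finitely generated extension $k$ of $\Q$, a non-root-of-unity eigenvalue $\alpha_1$ admits an embedding $\rho\colon k\hookrightarrow K$ into a local field with $|\rho(\alpha_1)|\neq 1$; one must choose the place \emph{adapted to the eigenvalue} (typically a place of bad behaviour), spread out over $k$, and then the finite $f_B^\rho$-invariant measure obtained by pushing forward the $p$-adic integral of the pluricanonical section, which charges no proper subvariety, is incompatible with the expansion $|\rho(\alpha_1)|\neq 1$ via the fundamental-domain decomposition $\coprod_{N\in\Z}(g^\rho)^N(A)$. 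Bell--Ghioca--Tucker interpolation plays no role and cannot replace this.

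A secondary gap: you assert that an iterate of $f$ fixes the pluricanonical section $\omega$ because ``$f$ acts by an integer matrix on the finite-dimensional space where $[\omega]$ lives.'' For $m\geq 2$ there is no evident $f^*$-invariant integral structure on $H^0(X,mK_X)$, and even granting that the eigenvalue $\xi$ is an algebraic integer you would still need all its Galois conjugates to have modulus $1$ before Kronecker applies. The paper handles this point either by the classical cohomological argument together with the Galois-embedding/invariant-volume trick, or again by the Key Lemma; some such argument must be supplied, since without $f^*\Omega=\Omega$ (after an iterate) the invariance of the $p$-adic measure you build is not available.
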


Following \cite{MR1756108}, we say that a birational transformation $g\colon Y \rato Y$ preserves a line bundle $L$ on $Y$ if there exists a resolution of indeterminacies 
$$
\begin{tikzcd}
\widetilde{Y} \arrow[rightarrow,swap]{d}{\eta} \arrow[rightarrow]{dr}{\widetilde g}  \\
Y \arrow[dashrightarrow]{r}{g}& Y
\end{tikzcd}.
$$
such that $\eta^* L= \widetilde g^* L$; when $g$ is an automorphism, or more generally a pseudo-automorphism (i.e. $g$ and $g\inv$ do not contract any hypersurface), one can define the pull-back of a line bundle through $g$ and show that $L$ is preserved by $g$ if and only if $g^*L=L$. A birational transformation $g$ which preserves a line bundle $L$ induces a linear automorphism
$$g^*\colon H^0(Y,L) \to H^0(Y,L)$$
defined by $g^*(s):=\eta_* \widetilde g^*(s)$; this definition makes sense because, by the projection formula, $\eta_*\eta^*L=L$.

\begin{rem}
The second assumption of Theorem \ref{main result} is automatically verified if $\pm K_B$ is big and $g$ is a pseudo-automorphism; remark however that, if $K_B$ is big, i.e. $B$ is of general type, then the group of birational transformations is finite, which implies the conclusion of the Theorem. \\
If $X$ is irreducible symplectic and $\pi\colon X\to B$ is a regular fibration onto a smooth projective manifold, then $K_B$ is ample, i.e. $B$ is Fano (see \cite[Corollary 1.3]{MR2130616}).
\end{rem}


By the same approach we also obtain an analogous result concerning groups of transformations. Recall that the group $\Aut(X)$ of automorphisms of $X$ can be naturally seen as a Zariski-open subset of the Hilbert scheme of subvarieties of $X\times X$, which endows it with a natural topology; we denote by $\Aut_0(X)$ the connected component of the identity of $\Aut(X)$ and by $\Bir(X)$ the group of birational transformations of $X$.

\begin{THM}
\label{main result groups}
Let $X$ be a complex projective manifold with $\kappa(X)\geq 0$ and let $G\subset \Bir(X)$ be a group of birational transformations of $X$.
Suppose that there exist a dominant rational map $\pi\colon X\rato B$ onto a projective manifold $B$ which is preserved by $G$ and let $\pi_*G$ denote the image of the natural morphism $\pi_* \colon G \to \Bir(B)$. Assume that 
\begin{enumerate}
\item the quotient $G/(G\cap \Aut_0(X))$ is finitely generated;
\item all elements of $\pi_*G$ preserve a big line bundle $L$.
\end{enumerate}
Then $\pi_*G$ is finite.
\end{THM}

\begin{proof}
The Kodaira-Iitaka fibration associated with some multiple $L^{\otimes h}$ of $L$ is birational onto its image and allows us to identify $\pi_*G$ with a subgroup of $\PGL H^0(B,L^{\otimes h})=\PGL_{N+1}(\C)$; see \textsection \ref{first reduction} for details. By Theorem \ref{main result}, $\pi_*G$ is torsion.

The connected component $\Aut_0(X)$ has a natural structure of connected algebraic group; therefore, by Chevalley's structure theorem (see for example \cite[Theorem 8.27]{MR3729270}), it is isomorphic to an extension of an abelian variety by a linear algebraic group. Since $\kappa(X)\geq 0$, linear algebraic groups have trivial action on $X$ by \cite[Theorem 14.1]{MR0506253}; this means that $\Aut_0(X)$ is an abelian variety.\\
Let $G_0:=G \cap \Aut_0(X)$; note that the topological closure $\overline G_0 \leq \Aut_0(X)$ still preserves the fibration $\pi$ and that the induced action on $B$ preserves $L$ (this is a consequence of the seesaw theorem, see \cite[Corollary 5.6]{MR2514037}). In particular  $\pi_*$ extends to $\overline G_0$ and, since $\overline G_0$ is compact, so is its image $\pi_*\overline G_0$.\\
Again by Theorem \ref{main result}, $\pi_*\overline G_0$ is torsion. Compact torsion subgroups of Lie groups are finite by \cite{MR0393342}, thus $\pi_*\overline G_0$ is finite; hence a fortiori so is $\pi_*G_0$.

Let $K\lhd G_0$ denote the kernel of the restriction $\pi_* \colon G_0 \to \PGL_{N+1}(\C)$; since $K$ has finite index in $G_0$, the group $G/K$ is finitely generated. The homomorphism $\pi_*$ factors through a natural homomorphism
$$\phi \colon G/K \to \PGL_{N+1}(\C),$$
so that $\pi_*G=\phi(G/K)$.

By \cite{MR0393342}, a torsion subgroup of a Lie group is virtually abelian; furthermore, by Schreier's lemma a finite index subgroup of a finitely generated group is finitely generated. Since it is easy to show that an abelian, finitely generated torsion subgroup of $\PGL_{N+1}(\C)$ is finite, we obtain that a finite index subgroup of $\phi(G/K)$ is finite. In particular  $\pi_*G=\phi(G/K)$ is finite, which concludes the proof.
\end{proof}

The following corollary has the advantage of requiring only numerical hypothesis on the action of $f_B$, instead of having to compute its action on the Picard group.

\begin{COR}
\label{cor:no ample line bundle}
Let $X$ be a projective manifold and let $f\colon X\rato X$ be a birational transformation. Suppose that there exist a dominant rational map $\pi\colon X\rato B$ onto a projective manifold $B$ and a birational transformation $f_B\colon B \rato B$ such that $f_B\circ \pi=\pi\circ f$. Assume that 
\begin{enumerate}
\item the Kodaira dimension $\kappa (X)$ is non-negative;
\item $\Pic^0(B)=0$;
\item the induced linear maps $(f_B^N)^*\colon H^*(B,\C)\to H^*(B,\C)$ have bounded norm as $N\to +\infty$.
\end{enumerate}
Then $f_B$ has finite order.
\end{COR}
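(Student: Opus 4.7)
The plan is to reduce to Theorem~\ref{main result} by producing an $f_B$-invariant big line bundle on $B$ from the numerical data in hypothesis~(3); hypothesis~(1) of the Corollary matches that of the Theorem, and once such a line bundle is in hand the conclusion is immediate.

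The first step is to prove that $f_B^*$ has finite order on $\Pic(B)$. Because $B$ is smooth projective and $\Pic^0(B)=0$, the exponential sequence gives an injection $c_1\colon \Pic(B)\hookrightarrow H^2(B,\Z)$ compatible with the $f_B^*$-action; in particular $\Pic(B)$ is a finitely generated abelian group. Hypothesis~(3) then forces the $\Z$-linear operator induced by $f_B^*$ on the free quotient $\Pic(B)/\mathrm{tors}\cong \Z^{\rho}$ to have powers of bounded operator norm. Since the set of integer matrices of bounded norm is finite, any element of $\mathrm{GL}_{\rho}(\Z)$ with bounded powers must have finite order; together with the finiteness of the torsion subgroup of $\Pic(B)$, this produces an integer $N\geq 1$ such that $(f_B^N)^*=\mathrm{id}_{\Pic(B)}$.

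The second step is an averaging construction. Choose any ample line bundle $A$ on $B$ and set
$$
L\;:=\;A+f_B^*A+(f_B^2)^*A+\cdots+(f_B^{N-1})^*A\;\in\;\Pic(B).
$$
Each pullback $(f_B^k)^*A$ is unambiguously defined because $f_B$ is a pseudo-automorphism, and satisfies $\vol((f_B^k)^*A)=\vol(A)>0$ (sections of $\mathcal{O}(mA)$ extend across the indeterminacy locus by Hartogs), hence is big; so $L$ is big too. A telescoping computation together with $(f_B^N)^*A=A$ in $\Pic(B)$ gives $f_B^*L=L$, so Theorem~\ref{main result} applied to $(X,f,\pi,f_B,L)$ concludes that $f_B$ has finite order.

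The only mildly delicate point is that Theorem~\ref{main result} requires an honest $f_B$-invariant element of $\Pic(B)$, not merely a class modulo $\Pic^0$; this is precisely where hypothesis~(2) is used, since otherwise the averaged class could a priori differ from $L$ by a nontrivial element of $\Pic^0(B)$. Beyond that, the argument is a combination of elementary lattice theory and the standard functoriality and volume-preservation of pullback under pseudo-automorphisms.
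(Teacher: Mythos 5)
Your argument is correct, but it follows a genuinely different route from the paper's. The paper invokes Weil's regularization theorem: hypothesis (3) allows one to replace $B$ by a smooth birational model and $f_B$ by an iterate so that $f_B$ becomes an automorphism lying in $\Aut^0(B)$; it then acts trivially on $H^*(B,\C)$, hence (using $\Pic^0(B)=0$) on $\Pic(B)$, so it fixes an ample line bundle and Theorem \ref{main result} applies. You instead stay on the given model: since $\Pic^0(B)=0$, the group $\Pic(B)$ injects into $H^2(B,\Z)$ and is finitely generated, hypothesis (3) bounds the powers of the integral matrix of $f_B^*$ on $\Pic(B)/\mathrm{tors}$, forcing finite order on $\Pic(B)$, and averaging an ample bundle over its orbit produces an $f_B$-invariant big line bundle, to which Theorem \ref{main result} applies directly. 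Your route is more elementary and self-contained (no regularization theorem, no passage to another birational model), at the cost of producing only a big, rather than ample, invariant bundle --- which is all the Theorem requires. Two standard facts are used implicitly and are worth recording explicitly: for a pseudo-automorphism the pullback on $\Pic(B)$ is functorial, i.e.\ $(f_B^N)^*=(f_B^*)^N$ there (this is what converts hypothesis (3), stated for the iterates, into boundedness of the powers of a single integer matrix, and what legitimizes the telescoping computation $f_B^*L=L$), and this pullback is compatible via $c_1$ with the cohomological action appearing in hypothesis (3); both hold precisely because $f_B$ is an isomorphism in codimension one and would fail for a general birational self-map.
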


\begin{rem}
The second and third assumptions of Corollary \ref{cor:no ample line bundle} are automatically satisfied if $\Pic^0(X)=0$ and the induced linear maps $(f^N)^*\colon H^*(X,\C)\to H^*(X,\C)$ have bounded norm.
\end{rem}

\begin{proof}
Since the induced linear maps $(f_B^N)^*\colon H^*(B,\C)\to H^*(B,\C)$ have bounded norm as $N\to +\infty$, by Weil's regularization theorem (see \cite[Theorem 3]{MR1453608}), up to replacing $B$ by a smooth birational model and $f_B$ by an iterate, we may assume that  $f_B$ is an automorphism and that $f_B \in \Aut^0(B)$. In particular, $f_B$ has trivial action on $H^*(B,\C)$ and thus, since line bundles on $B$ are uniquely determined by their numerical class, on $\Pic(B)$. Therefore $f_B$ is an automorphism which preserves an ample line bundle, hence by Theorem \ref{main result} it has finite order.
\end{proof}

An almost equivalent formulation in terms of Kodaira-Iitaka fibrations is the following:

\begin{COR}
\label{cor:Kodaira-Iitaka}
Let $X$ be a complex projective manifold with non-negative Kodaira dimension and let $f\colon X\rato X$ be a birational transformation. If $f$ preserves a line bundle $\mc L$, then the induced projective automorphism
$$f^*\colon \Pj H^0(X,\mc L) \to \Pj H^0(X,\mc L)$$
has finite order. 
\end{COR}

As was pointed out to me by Vlad Lazi\'c, such a formulation is linked with the problem of determining the finiteness of pluri-log-canonical representations, which in turn plays a role in the problem of abundance conjecture. In this context one needs to consider the linear action of the group of birational transformations of a normal scheme $X$ preserving a divisor $\Delta$ on the space of sections $H^0(X,m(K_X+\Delta))$ (instead of its projectification as is done in the present work). Proving the finiteness of the linear action for all $m$ multiples of a certain $m_0$ (which is a stronger result than Corollary \ref{cor:Kodaira-Iitaka}), together with the application of the Minimal Model Program, allows to reduce the Abundance Conjecture to the case of log-canonical pairs (see \cite[Theorem 5.10]{MR1756108} and \cite[Theorem 1.4]{MR3200670}).  See \cite{MR1756108,MR3200670} for more details and for positive results in this direction.

\begin{rem}
Using Theorem \ref{main result groups}, it is not hard to extend Corollary \ref{cor:no ample line bundle} and \ref{cor:Kodaira-Iitaka} to groups $G$ of birational transformations such that $G/(G\cap \Aut_0(X))$ is finitely generated.
\end{rem}

\subsection{The case of irreducible symplectic manifolds}

\hyphenation{in-ter-es-ting}
Theorem \ref{main result} is particularly interesting in the case where $X$ is an irreducible symplectic manifold. A compact K\"ahler manifold is said to be \emph{irreducible symplectic} (or hyperk\"ahler) if it is simply connected and the vector space of holomorphic $2$-forms is spanned by a nowhere degenerate form. Irreducible symplectic manifolds form, together with Calabi-Yau manifolds and complex tori, one of the three fundamental classes of compact K\"ahler manifolds with trivial Chern class.

\begin{ex}
All $K3$ surfaces are irreducible symplectic; more generally, if $S$ is a $K3$ surface, then the Hilbert scheme $S^{[n]}$ of $n$ points on $S$ is an irreducible symplectic manifold of dimension $2n$.\\
Similarly, if $T$ is a two-dimensional complex torus and if
$$\Sigma \colon T^{[n+1]}\to T$$
denotes the sum morphism, i.e. $\Sigma[p_1,\ldots, p_{n+1}] := \sum _i p_i\in T$, then any fibre of $\Sigma$ is an irreducible symplectic manifold of dimension $2n$, called the generalized Kummer variety associated to $T$.
\end{ex}

We will say that $X$ is of type $K3^{[n]}$ (resp. generalized Kummer) if it is deformation equivalent to the Hilbert scheme of $n$ points on a $K3$ surface (resp. to a generalized Kummer variety); all known irreducible symplectic manifolds are of type $K3^{[n]}$, generalized Kummer or deformation equivalent to one of two sporadic examples by O'Grady. See \cite{MR1963559} for a complete introduction to irreducible symplectic manifolds.

Any birational transformation $f\colon X \rato X$ of an irreducible holomorphic symplectic manifold is actually a pseudo-automorphism; furthermore, $f$ induces a linear automorphism
$$f^*\colon H^2(X,\Z) \to H^2(X,\Z).$$
We will call the \emph{first dynamical degree} of $f$ the maximum $\lambda_1(f)$ of the moduli of eigenvalues of $f^*$; since $f^*$ preserves a closed salient cone, $\lambda_1(f)$ is actually a real positive eigenvalue of $f^*$.

The following result was proven in  \cite{MR3431659}.

\begin{thm}
\label{hu zhang} 
Let $X$ be a $2n$-dimensional projective irreducible symplectic manifold of type $K3^{[n]}$ or of type generalized Kummer and let $f\in \Bir(X)$ be a birational transformation with infinite order; the first dynamical degree $\lambda_1(f)$ is equal to $1$ if and only if there exist a rational Lagrangian fibration $\pi\colon X\rato \Pj^n$ and an automorphism $g\in Aut(\Pj^n)=PGL_{n+1}(\C)$ such that $\pi\circ f=g\circ \pi$.
\end{thm}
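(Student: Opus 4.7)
The plan is to exploit two structural tools specific to $X$ irreducible symplectic of type $K3^{[n]}$ or generalized Kummer: the Beauville--Bogomolov--Fujiki quadratic form $q$ on $H^2(X,\R)$, of Lorentzian signature on $H^{1,1}(X,\R)$, together with the hyperk\"ahler SYZ-type theorems that produce Lagrangian fibrations from nef isotropic classes. The induced map $f^*$ acts on $H^{1,1}(X,\R)$ as a $q$-isometry preserving the positive cone, so its dynamics is governed by the Lorentzian trichotomy into elliptic, parabolic, and hyperbolic cases.

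For the implication $(\Leftarrow)$ I would argue as follows: let $H$ denote the hyperplane class on $\Pj^n$; since $\PGL_{n+1}(\C)$ acts trivially on $\Pic(\Pj^n)=\Z$, one has $g^*H=H$ and hence $f^*(\pi^*H)=\pi^*(g^*H)=\pi^*H$. The class $\alpha:=\pi^*H$ is nonzero, nef, and isotropic for $q$: the Lagrangian condition gives $\alpha^{n+1}=0$ while $\alpha^n\neq 0$, and by the Fujiki relation $\alpha^{2n}=c_X\, q(\alpha)^n$ this forces $q(\alpha)=0$. A $q$-isometry fixing a nonzero isotropic class on the boundary of a preserved positive cone necessarily has spectral radius $1$, so $\lambda_1(f)=1$.

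For $(\Rightarrow)$, assuming $\lambda_1(f)=1$, the trichotomy rules out the hyperbolic case, while the elliptic case (where $f^*|_{H^{1,1}}$ has finite order on the integral lattice) is excluded by Torelli/Oguiso-type rigidity for $K3^{[n]}$ and generalized Kummer types: the kernel of the representation $\Bir(X)\to\Aut(H^2(X,\Z))$ is known to be finite, which would contradict the infinite order of $f$ up to iterates. Thus $f^*|_{H^{1,1}}$ is parabolic, with a unique (up to scalar) fixed isotropic ray on the boundary of the positive cone; integrality of the action on $H^2(X,\Z)$ lets one realise this ray as spanned by a primitive nef integral class $\alpha$. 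I would then invoke the hyperk\"ahler SYZ theorem, established for $K3^{[n]}$ type by Bayer--Macr\`i and Markman and for generalized Kummer type by Yoshioka and Markman: after a suitable birational modification of $X$, the class $\alpha$ is proportional to the pullback of an ample class via a Lagrangian fibration. Matsushita's theorem shows the base has Picard rank one with the Hodge numbers of $\Pj^n$, and Hwang's theorem identifies it with $\Pj^n$. Transporting this back to $X$ yields a rational Lagrangian fibration $\pi\colon X\rato\Pj^n$; since $f^*\alpha=\alpha$ and $\alpha=\pi^*\mathcal{O}(1)$, the map $f$ preserves the fibres of $\pi$ and descends to a birational self-map $g$ of $\Pj^n$ fixing the hyperplane class, hence to an element of $\Aut(\Pj^n)=\PGL_{n+1}(\C)$ satisfying $\pi\circ f=g\circ \pi$.

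The main obstacle will be the SYZ step: producing an actual Lagrangian fibration from a nef isotropic class is a deep result that genuinely uses the deformation-type hypothesis, via Bayer--Macr\`i wall-crossing for moduli spaces of sheaves on $K3$ surfaces (respectively the analogous description in the Kummer case). A secondary delicate point is promoting the real parabolic fixed direction to an integral class and verifying that the induced $g$ is globally regular on $\Pj^n$ rather than merely birational.
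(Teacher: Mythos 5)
The paper does not actually prove this statement: it is quoted from Hu, Keum and Zhang \cite{MR3431659} and used as a black box, so there is no internal proof to compare with. Measured against the published argument, your sketch follows essentially the same route: $f^*$ acts as an integral isometry of the Beauville--Bogomolov form on $H^{1,1}$ (and $\lambda_1(f)$ is its spectral radius because birational self-maps of hyperk\"ahler manifolds are isomorphisms in codimension one, so $(f^N)^*=(f^*)^N$ on $H^2$), the elliptic/parabolic/hyperbolic trichotomy, finiteness of the kernel of $\Bir(X)\to \Aut(H^2(X,\Z))$ to exclude the elliptic case, the SYZ-type theorems for $K3^{[n]}$ and generalized Kummer type, and Matsushita--Hwang to identify the base with $\Pj^n$. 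The $(\Leftarrow)$ direction is correct as you state it (note only that $\pi^*H$ for a merely rational fibration is movable rather than nef, but your argument uses only isotropy and membership in the closure of the positive cone, so it stands).

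The one step that, as written, would fail is the claim that integrality of $f^*$ lets you take the parabolic fixed ray to be spanned by a primitive \emph{nef} integral class $\alpha$. Rationality of the ray is fine (it is the radical of the rational quadratic form restricted to $\ker(f^*-\mathrm{id})$), but nefness is not automatic, and, more seriously, the Bayer--Macr\`i/Markman/Yoshioka results you invoke require $\alpha$ to be nef on some birational hyperk\"ahler model, i.e.\ to lie in the closure of the movable (birational K\"ahler) cone; an isotropic class merely on the boundary of the positive cone need not qualify, and then no birational modification makes it nef. The missing idea is that $f^*$, coming from a birational self-map, preserves the movable cone, and the parabolic dynamics pushes (normalized iterates of) any movable class to the unique fixed isotropic ray, which therefore lies in the closure of the movable cone; only after this can one pass to a model where $\alpha$ is nef and obtain the Lagrangian fibration. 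The second point you flag, regularity of the induced map on the base, is genuinely needed and is settled by showing that the induced birational self-map $g$ of $\Pj^n$ satisfies $g^*\mathcal{O}(1)=\mathcal{O}(1)$ (using $f^*\pi^*\mathcal{O}(1)=\pi^*\mathcal{O}(1)$ and the codimension-one control on $f$), which forces $g$ to be linear, hence in $\PGL_{n+1}(\C)$.
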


This allows to prove the following corollary:

\begin{COR}
Let $f$ be a birational transformation of a projective irreducible symplectic manifold $X$ of type $K3^{[n]}$ or generalized Kummer; then $f$ admits a Zariski-dense orbit if and only if the first dynamical degree $\lambda_1(f)$ is $>1$.
\end{COR}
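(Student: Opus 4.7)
The plan is to prove the two implications of the equivalence separately. The main contribution of this paper, and the harder half of the corollary, is the implication $\lambda_1(f)=1\Rightarrow$ no Zariski-dense orbit; the converse is classical and cited from the existing literature on the ergodic theory of birational transformations of compact Kähler manifolds.

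For the direction $\lambda_1(f)=1\Rightarrow$ no Zariski-dense orbit, I may assume $f$ has infinite order, as otherwise every orbit is finite and thus not dense in the positive-dimensional variety $X$. By Theorem \ref{hu zhang}, there then exist a rational Lagrangian fibration $\pi\colon X\rato \Pj^n$ and an automorphism $g\in \Aut(\Pj^n)=\PGL_{n+1}(\C)$ with $\pi\circ f=g\circ \pi$. I apply Theorem \ref{main result} to the data $(X,f,\Pj^n,g)$: the Kodaira dimension $\kappa(X)$ is zero since $K_X$ is trivial; the map $g$ is an automorphism, hence in particular a pseudo-automorphism; and since $\Pic(\Pj^n)=\Z\cdot \mc O(1)$, every element of $\PGL_{n+1}(\C)$ preserves the ample (in particular big) line bundle $\mc O(1)$. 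Theorem \ref{main result} thus forces $g$ to have finite order, say $k$. Replacing $f$ by $f^k$ one may assume $g=\mathrm{id}$, so that $\pi\circ f=\pi$ and each fibre of $\pi$ is $f$-invariant. Every $f^k$-orbit avoiding the indeterminacy loci is then contained in a single fibre of $\pi$, which is Lagrangian of dimension $n<\dim X=2n$; hence every $f$-orbit is the union of at most $k$ such $f^k$-orbits and is contained in a proper Zariski-closed subset of $X$.

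For the converse direction $\lambda_1(f)>1\Rightarrow$ existence of a Zariski-dense orbit, the hypothesis forces $f$ to have infinite order (since $\lambda_1(f^m)=\lambda_1(f)^m$). The conclusion then follows from the standard machinery of invariant Green currents and measures of maximal entropy for (bi)meromorphic transformations with positive first dynamical degree (Cantat in the surface case, Dinh–Sibony in higher dimension): one produces an $f$-invariant probability measure whose support is Zariski-dense, and a generic point with respect to this measure has a Zariski-dense orbit.

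The main obstacle lies in the first direction, and more precisely in the verification that the geometric setup provided by Theorem \ref{hu zhang} satisfies the hypotheses of Theorem \ref{main result}: the key observations being that $K_X$ is trivial and that $\PGL_{n+1}(\C)$ automatically preserves $\mc O_{\Pj^n}(1)$. Once these are in place, Theorem \ref{main result} kills the $\Pj^n$-action after a finite iterate, and the conclusion is immediate from the Lagrangian (hence proper) nature of the fibres.
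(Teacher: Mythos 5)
Your first direction ($\lambda_1(f)=1$ implies no Zariski-dense orbit) is correct and is essentially the paper's argument, with some extra (welcome) care: you dispose of the finite-order case before invoking Theorem \ref{hu zhang}, you check $\kappa(X)=0$ and the $\PGL_{n+1}(\C)$-invariance of $\mc O_{\Pj^n}(1)$ so that Theorem \ref{main result} applies, and you conclude as in the paper that every orbit lies in finitely many fibres of the Lagrangian fibration.

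The gap is in the converse. You assert that $\lambda_1(f)>1$ yields a Zariski-dense orbit by ``standard machinery of invariant Green currents and measures of maximal entropy (Cantat, Dinh--Sibony)''. This does not work in this generality: in dimension $>2$, the condition $\lambda_1(f)>1$ alone does not provide a measure of maximal entropy via Dinh--Sibony theory (one needs a strictly dominant dynamical degree and stability hypotheses), and more fundamentally the implication ``$\lambda_1>1\Rightarrow$ some orbit is Zariski-dense'' is simply false for general projective manifolds: a birational map preserving a fibration, acting trivially on the base and with positive entropy on the fibres, has $\lambda_1>1$ while every orbit stays in a fibre. So the irreducible symplectic hypothesis must be used in an essential way in this direction, and it is not used in your sketch. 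The paper instead quotes the Main Theorem of \cite{lobiancoprimitivity}, a result specific to birational transformations of irreducible symplectic manifolds with $\lambda_1>1$, which states that very general orbits are Zariski-dense (its proof exploits the hyperk\"ahler structure, e.g. the Beauville--Bogomolov form, rather than general pluripotential theory). To repair your proof you should either invoke that result or supply an argument that genuinely uses the type $K3^{[n]}$ or generalized Kummer structure; the appeal to generic points of an invariant measure, as written, does not close the argument.
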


\begin{proof}
If $\lambda_1(f)>1$, then by \cite[Main Theorem]{lobiancoprimitivity} the very general orbits of $f$ are Zariski-dense.

Assume conversely that $\lambda_1(f)=1$; by Theorem \ref{hu zhang} there exists a Lagrangian fibration $\pi\colon X\rato \Pj^n$ and a linear automorphism $g\in \Aut(\Pj^n)$ such that $\pi\circ f=g\circ \pi$. Since $g$ is biregular, it preserves the ample line bundle $\mc O_{\Pj^n}(1)$.\\
Therefore by Theorem \ref{main result} $g$ has finite order; in particular all the orbits of $f$ are contained in a finite union of fibres of $\pi$, hence they are not Zariski-dense.
\end{proof}

\section{Examples}
\begin{flushleft}
\begin{minipage}{0.65\textwidth}
\begin{ex}
Let $E=\C/\Lambda$ be an elliptic curve, $T=E\times E$ and
$$f_T\colon (x,y) \mapsto (x,x+y).$$
Then $f_T$ preserves the projection $\pi_1\colon X\to E$ on the first factor; the induced action on $E$ is the identity.\\
Let now
$$\tilde f_T\colon (x,y)\mapsto (x+a,x+y) \qquad \text{for some }a\in E.$$
Again $\tilde f_T$ preserves the projection $\pi_1$, but if $a$ is not a torsion element, then the induced action on $E$ is not finite. Such action preserves the numerical class of an ample line bundle, but not the line bundle itself; this example shows that assumption $(2)$ in Theorem \ref{main result} cannot be weakened to a numerical one.
\end{ex}
\end{minipage}
\begin{minipage}{0.3\textwidth}
    \centering
    \begin{tikzpicture}
\draw (0,0) rectangle (3.5,3.5);
\draw[thick,->] (0,0) -- (3.5,0) node[anchor= west] {E};
\draw[thick,->] (0,0) -- (0,3.5) node[anchor=north east] {E};
\foreach \x in {0.5,1,1.5,2,2.5,3}
	\draw[gray,very thin] (\x,\x) -- (\x,3.5);
\foreach \x in {0.5,1,1.5,2,2.5,3}
	\draw[red,  thick,->] (\x,0) -- (\x,\x);
\draw[red,  thick,->] (2,0) -- (2,2) node[anchor=south] {$\small{f_T}$};
\end{tikzpicture}
\end{minipage}
\end{flushleft}

\begin{ex}
Let $T$ be as above and let $S=Km(T)$ be the Kummer surface associated to $T$ (which is a $K3$ surface), i.e. $S$ is the minimal resolution of the singular quotient $T/\langle \pm \id\rangle$. The automorphism $f_T$ commutes with $-\id$, and thus it induces an automorphism $f_S\colon S\to S$. Similarly, the projection onto the first factor induces a fibration $\pi\colon S \to \Pj^1$, and one can easily see that the following diagram commutes:
\[
        \begin{tikzcd}[ampersand replacement=\&, column sep=small]
            S \ar[d, "\displaystyle{\pi}"'] \ar[r, "\displaystyle{f_S}"]  \&  S \ar[d, "\displaystyle{\pi}"', swap]\\
             \Pj^1 \ar[r, "\displaystyle{id}"'] \& \Pj^1
        \end{tikzcd}
    \]
This shows a non-trivial example of the situation described by Theorem \ref{main result} in dimension $2$.
\end{ex}

\begin{ex}
 Let $S$ be as above and let $X=S^{[n]}$ be the Hilbert scheme of $n$ points on $S$; $f_S$ and $\pi$ induce a commutative diagram \[
        \begin{tikzcd}[ampersand replacement=\&, column sep=small]
            X \ar[d, "\displaystyle{\pi}"'] \ar[r, "\displaystyle{f_S}"]  \&  X \ar[d, "\displaystyle{\pi}"', swap]\\
             \Pj^n \ar[r, "\displaystyle{id}"'] \& \Pj^n
        \end{tikzcd}
    \]
A similar construction can be carried out on the generalized Kummer varieties of $T$. This shows non-trivial examples of the situation described by Theorem \ref{main result} in any even dimension.
\end{ex}

\begin{ex}
 Let $X=Y\times \Pj^1$ where $Y$ is any projective manifold and let $f=(id_Y, g) \in \Aut(X)$, where $g\in \Aut(\Pj^1)$. Then the following diagram clearly commutes 
  \[
        \begin{tikzcd}[ampersand replacement=\&, column sep=small]
            X \ar[d, "\displaystyle{\pi_2}"'] \ar[r, "\displaystyle{f_S}"]  \&  X \ar[d, "\displaystyle{\pi_2}"', swap]\\
             \Pj^1 \ar[r, "\displaystyle{g}"'] \& \Pj^1
        \end{tikzcd}
    \]
The automorphism $g$ preserves the ample (hence big) line bundle $\mc O_{\Pj^1}(1)$, and yet it is generally not of finite order. This shows that the assumption on the Kodaira dimension of $X$ in Theorem \ref{main result} is necessary.
\end{ex}

\section{Elements of $p$-adic integration}

In this section we give an introduction to $p$-adic integration; see \cite{chambert2014motivic}, \cite[Chapter 3]{popa2011modern} and \cite{MR1743467}.

\subsection{$p$-adic and local fields}

We remind that, for a prime number $p$, the $p$-adic norm on $\Q$ is defined as
$$\left| p^n\cdot \frac ab\right|=p^{-n}\qquad p\nmid a,\quad p\nmid b.$$
We denote by $\Q_p$ the metric completion of $(\Q, |\cdot|_p)$; every element of $\Q_p$ can be uniquely written as a Laurent series
$$a=\sum _{n=n_0}^{+\infty}a_np^n\qquad a_i\in \{0,1,\ldots,p-1\}.$$
Denote by $\Z_p$ the closed unit ball in $\Q_p$. It is an integrally closed local subring of $\Q_p$ with maximal ideal $p\Z_p$ and residue field $\mathbb F_p$; its field of fractions is $\Q_p$, and it is a compact, closed and open subset of $\Q_p$.

A \emph{$p$-adic field} is a finite extension $K$ of $\Q_p$ for some prime $p$; on $K$ there exists a unique absolute value $|\cdot|_K$ extending $|\cdot|_p$. We denote by $\mathcal O_K$ the closed unit ball in $K$.

A \emph{local field} is a field $K$ with a valuation $|\cdot|\colon K\to \R_{\geq 0}$ such that $K$ with the induced topology is locally compact. We say that two local fields $(K,|\cdot |_K)$ and $(K',|\cdot |_{K'})$ are equivalent if there exists a ring isomorphism $\phi\colon K \overset{\sim}{\to} K'$ such that there exists $a\in \R^+$ satisfying $|\phi(x)|_{K'}=|x|_K^a$ for all $x\in K$.

\begin{thm}
A local field of characteristic $0$ is equivalent either to $\R$ or $\C$ endowed with the usual absolute values (archimedean case) or to a $p$-adic field endowed with the unique extension of $|\cdot|_p$ (non-archimedean case).
\end{thm}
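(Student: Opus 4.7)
\emph{Proof plan.} My strategy combines Ostrowski's theorem on the absolute values of $\Q$ with a Riesz-type finite-dimensionality theorem for locally compact topological vector spaces. First I would check that the restriction $|\cdot|_{\Q}$ of the valuation to the prime subfield $\Q\subset K$ is non-trivial, exploiting the tacit assumption that the topology of $K$ is non-discrete (otherwise trivial counterexamples such as $\Q$ with the trivial absolute value would appear in the statement). With this in hand, Ostrowski's theorem would yield that $|\cdot|_{\Q}$ is equivalent either to the usual archimedean absolute value, or to some $p$-adic absolute value $|\cdot|_p$.

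Since every locally compact valued field is complete, $K$ then contains the completion of $(\Q,|\cdot|_{\Q})$, which is either $\R$ or $\Q_p$; call this subfield $k$. The crux of the argument is to show that $K$ is finite-dimensional as a $k$-vector space. For this I would invoke the classical theorem stating that every Hausdorff locally compact topological vector space over a non-discrete locally compact field is finite-dimensional; the proof proceeds by choosing a compact neighbourhood of zero in $K$, covering it by translates of a smaller open set, and absorbing the result by scalars of small absolute value in $k$. This is the step I expect to be the main obstacle, since it is the only non-formal part of the argument and uses in an essential way the local-compactness structure of $k$.

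Once $n:=[K:k]<\infty$, the classification should follow without difficulty. In the archimedean case, the only finite extensions of $\R$ are $\R$ itself and $\C$, and any absolute value on such a field compatible with the subspace topology inherited from $K$ must be equivalent to the usual modulus. In the non-archimedean case, $K$ is by definition a $p$-adic field, and the formula $|x|_K=|N_{K/\Q_p}(x)|_p^{1/n}$ defines the unique extension of $|\cdot|_p$ to $K$; the uniqueness of Hausdorff field topologies on a finite-dimensional complete-valued $k$-vector space then forces the original $|\cdot|$ on $K$ to be equivalent to this extension, completing the classification.
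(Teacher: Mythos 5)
The paper itself states this classification as a classical background fact and gives no proof (it refers the reader to its references on $p$-adic integration), so there is nothing internal to compare against; judged on its own terms, your outline is the standard textbook argument (Ostrowski on $\Q$, completeness of locally compact valued fields, the Riesz-type finite-dimensionality theorem over $\R$ or $\Q_p$, then the classification of finite extensions together with uniqueness of the extended absolute value via the norm formula), and it is sound. You also correctly isolate the finite-dimensionality step as the only non-formal ingredient. One point deserves more care than your sketch gives it: the non-triviality of $|\cdot|$ restricted to $\Q$ does not follow merely from the ``tacit'' non-discreteness of the topology of $K$. Non-discreteness only tells you $|\cdot|$ is non-trivial on $K$; a priori it could still be trivial on $\Q$, in which case $\Z$ is bounded and the valuation is non-archimedean. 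To rule this out you should use local compactness again: the unit ball $\{|x|\leq 1\}$ is then a compact open ring whose maximal ideal $\{|x|<1\}$ is open, so the residue field is compact and discrete, hence finite; but if $|\cdot|$ were trivial on $\Q$, the rationals would inject into that residue field, a contradiction (this is also consistent with the convention, implicit in the statement, that the trivial valuation — which does make $K$ locally compact but discrete — is excluded). With that argument supplied, the remaining steps you list are complete and standard.
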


\subsection{Measure on $K$}

On a locally compact topological group $G$ there exists a measure $\mu$, unique up to scalar multiplication, called the \emph{Haar measure of $G$} such that:
\begin{itemize}
\item any continuous function $f\colon G\to \C$ with compact support is $\mu$-integrable;
\item $\mu$ is $G$-invariant to the left.
\end{itemize}
Other important properties of the Haar measure are as follows: every Borel subset of $G$ is measurable; $\mu(A)>0$ for every nonempty open subset of $G$.

We consider $G=(\Q_p,+)$, and take on it the Haar measure $\mu$ normalized so that
$$\mu(\Z_p)=1.$$

\begin{ex}
It is easy to show that for every $m\geq 0$ one has $\mu(p^m\Z_p)=p^{-m}$.
\end{ex}

More generally, on a $p$-adic field $K$ we consider the Haar measure $\mu$ such that
$$\mu(\mathcal O_K)=1.$$

\subsection{Integration on $K$-analytic manifolds}
\label{integration}

Let $K$ be a $p$-adic field with norm $|\cdot|$. For any open subset $U\subset K^n$, a function $f\colon U\to K$ is said to be $K$-analytic if locally around each point it is given by a convergent power series. Similarly, we call $f=(f_1,\ldots,f_m)\colon U\to K^m$ a $K$-analytic map if all the $f_i$ are analytic.

As in the real and complex context, we define a $K$-analytic manifold of dimension $n$ as a Hausdorff topological space locally modelled on open subsets of $K^n$ and with $K$-analytic change of charts.\\
\begin{ex}\begin{enumerate}
\item Every open subset $U\subset K^n$ is a $K$-analytic manifold of dimension $n$; in particular, the set $\mathcal O_K^n\subset K^n$ is a $K$-analytic manifold.
\item The projective space $\Pj_K^n$ over $K$ is a $K$-analytic manifold.
\item Every smooth algebraic variety over $K$ is a $K$-analytic manifold; in order to see this one needs a $K$-analytic version of the implicit function theorem (see \cite[\textsection 1.6.4]{chambert2014motivic}).
\end{enumerate}
\end{ex}

Differential forms are defined in the usual way via charts: on a chart $U$ with coordinates $x_1,\ldots ,x_n$, a differential form of degree $k$ can be written as
$$\alpha=\sum_{|I|=k}f_I(x_1,\ldots, x_n)dx_{i_1}\wedge\ldots \wedge dx_{i_k}$$
with $f_I\colon U\to K$ functions on $U$; if the $f_I$ are $K$-analytic we say that the form is analytic.\\
Now take a maximal degree analytic differential form $\omega$; let $\phi\colon U\to K^n$ be a local chart, defining local coordinates $x_1,\ldots ,x_n$. In these coordinates we can write
$$\phi_* \omega=f(x_1,\ldots ,x_n)dx_1\wedge \ldots \wedge dx_n.$$
Then one can define a Borel measure $|\omega|$ on $U$ as follows: for any open subset $A\subset U$, we set
$$|\omega|(A)=\int_{\phi(A)}\left|f(x)\right|_K d\mu,$$
where $\mu$ is the usual normalized Haar measure on $\phi(U)\subset K^n$.\\
Similarly, let $\omega$ be a maximal degree pluri-form, i.e. a section of the analytic sheaf $(\Omega_X^n)^{\otimes m}$ for some $m>0$; let $\phi\colon U\to K^n$ be a local chart, defining local coordinates $x_1,\ldots ,x_n$. In these coordinates we can write
$$\phi_* \omega=f(x_1,\ldots ,x_n)(dx_1\wedge \ldots \wedge dx_n)^{\otimes m}.$$
Then one can define a Borel measure $\sqrt[m]{|\omega|}$ on $U$ as follows: for any open subset $A\subset U$, we set
$$\sqrt[m]{|\omega|}(A)=\int_{\phi(A)}\sqrt[m]{\left|f(x)\right|_K} d\mu,$$
where $\mu$ is the usual normalized Haar measure on $\phi(U)\subset K^n$.

Now let $\omega$ be a global section of $\Omega_X^n$ (resp. $(\Omega_X^n)^{\otimes m}$). To define a Borel measure $|\omega|$ (resp. $\sqrt[m]{|\omega|}$) on the whole manifold $X$, one uses partitions of unity exactly as in the real case. The only thing to check is that $|\omega|$ (resp. $\sqrt[m]{|\omega|}$) transforms precisely like differential forms when changing coordinates; this is a consequence of the following $K$-analytic version of the change of variables formula.

\begin{thm}[Change of variables formula]
Let $U$ be an open subset of $K^n$ and let $\phi\colon U\to K^n$ be an injective $K$-analytic map whose Jacobian $J_\phi$ is invertible on $U$. Then for every measurable positive (resp. integrable) function $f\colon \phi(U)\to \R$
$$\int_{\phi(U)}f(y)d\mu(y)=\int_U f(\phi(x))\left|\det J_{\phi}(x)\right|_K d\mu(x).$$
\end{thm}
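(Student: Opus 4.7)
The plan is to reduce to the case of a linear $\phi$, where the formula follows from uniqueness of Haar measure, and then to exploit two non-archimedean features — the totally disconnected topology of $K^n$ and the strong triangle inequality — to decompose $U$ into countably many disjoint clopen balls on which $\phi$ is \emph{exactly} an affine map composed with a measure-preserving analytic isometry.

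First I would handle the linear case $\phi(x) = Ax$ with $A \in \mathrm{GL}_n(K)$. The pushforward $\phi_\ast \mu$ is translation invariant and locally finite on $K^n$, hence equals $c(A)\mu$ by uniqueness of the Haar measure. Computing $c$ directly for scalar matrices $\pi^k I$ on $\mathcal O_K^n$ (where $\pi$ is a uniformizer), then invoking multiplicativity $c(AB) = c(A)c(B)$, the fact that $\mathrm{GL}_n(\mathcal O_K)$ preserves $\mathcal O_K^n$, and the Cartan decomposition $\mathrm{GL}_n(K) = \mathrm{GL}_n(\mathcal O_K) \cdot T \cdot \mathrm{GL}_n(\mathcal O_K)$ (with $T$ the group of diagonal matrices), yields $c(A) = |\det A|_K^{-1}$ in general. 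Unpacking the definition of pushforward gives the linear version of the formula.

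For general $\phi$, fix $x_0 \in U$ and write $\phi(x) = \phi(x_0) + L(x - x_0) + R(x)$ with $L = J_\phi(x_0)$ invertible and $R$ vanishing to order at least $2$ at $x_0$. For $r > 0$ small enough, convergence of the defining power series forces the entries of $L^{-1} J_R$ to lie in the maximal ideal throughout $B(x_0, r)$, so the strong triangle inequality gives $\|h(x) - h(y)\|_K = \|x - y\|_K$ on $B(x_0, r) \times B(x_0, r)$ for $h(x) = x + L^{-1} R(x)$. Hence $h$ is an analytic isometry of $B(x_0, r)$, and by a fixed-point argument (non-archimedean inverse function theorem) it is an automorphism of this ball; as an isometry it sends balls to balls of the same radius, so it preserves Haar measure. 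Writing $\phi = \psi \circ h$ with $\psi(x) = \phi(x_0) + L(x - x_0)$ affine of linear part $L$, and noting that $|\det J_\phi|_K \equiv |\det L|_K$ on $B(x_0, r)$ (since $\det J_h \in 1 + \mathfrak m_{\mathcal O_K}$), the linear case applied to $\psi$ together with Haar invariance of $h$ yields
$$\int_{\phi(B(x_0, r))} f\, d\mu = |\det L|_K \int_{B(x_0, r)} f(\phi(x))\, d\mu(x) = \int_{B(x_0, r)} f(\phi(x))\, |\det J_\phi(x)|_K\, d\mu(x).$$
Since $K^n$ admits a basis of clopen ultrametric balls and $U$ is Lindel\"of, $U$ decomposes as a countable disjoint union of balls $B_i$ of this kind; injectivity of $\phi$ makes the images $\phi(B_i)$ pairwise disjoint and exhaust $\phi(U)$, and countable additivity of $\mu$ assembles the local identities into the global formula.

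The main obstacle is establishing the sharp local factorization $\phi = \psi \circ h$ with $h$ an \emph{exact} analytic isometry of the ball — not merely an approximate one. In the archimedean setting no such factorization exists and the proof must absorb smooth Jacobian variations via partitions of unity and limiting arguments; here the ultrametric strong triangle inequality turns the statement into an equality on balls small enough that the higher-order remainder is dominated by the linear part, reducing the whole formula to measure-theoretic bookkeeping on a disjoint cover.
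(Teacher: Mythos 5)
The paper itself does not prove this theorem: it is quoted as background and attributed to the standard references on $p$-adic integration (Igusa, Chambert-Loir, Popa), so there is no internal proof to compare with. Your outline is in fact the standard textbook argument (linear case via uniqueness of Haar measure, then local factorization of $\phi$ into its affine linearization composed with an ultrametric isometry on small clopen balls, then countable disjoint decomposition and injectivity), and its overall architecture is correct. Two points, however, need repair. The serious one is the justification of the isometry step: the implication ``the entries of $L^{-1}J_R$ lie in the maximal ideal on $B(x_0,r)$, hence $\|h(x)-h(y)\|=\|x-y\|$'' is false as a general principle in the non-archimedean setting, because a small Jacobian does not bound the Lipschitz constant. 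For instance $u(x)=x^p$ on $\mathcal O_K$ has $|u'(x)|\leq |p|<1$ everywhere but is not a contraction, and $h(x)=x+x^p$ has $J_h\in 1+\mathfrak m$ everywhere on $\mathcal O_K$ yet fails to be an isometry (and even injectivity) on the unit ball as soon as the residue field contains a nonzero root of $t+t^p$. What saves you is not the Jacobian but the fact that $R$ vanishes to order $\geq 2$ at $x_0$: writing $L^{-1}R(x)=\sum_{|\alpha|\geq 2}c_\alpha (x-x_0)^\alpha$ and factoring each monomial difference, one gets $\|L^{-1}R(x)-L^{-1}R(y)\|\leq \bigl(\max_\alpha \|c_\alpha\| r^{|\alpha|-1}\bigr)\|x-y\|$, and convergence of the series makes this factor $<1$ once $r$ is small; then the strong triangle inequality gives the exact isometry and the rest of your local argument (including $|\det J_\phi|\equiv|\det L|$ on the ball, which \emph{is} correctly a Jacobian statement) goes through. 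So the step must be argued through the power-series coefficients of the quadratic remainder, not through $J_R$.

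Two smaller remarks. In the linear case, computing $c$ only on scalar matrices $\pi^k I$ is not enough to feed the Cartan decomposition, since $\mathrm{diag}(\pi^{k_1},\dots,\pi^{k_n})$ is not a product of scalars and elements of $\mathrm{GL}_n(\mathcal O_K)$; but the same direct computation (a product of one-variable computations) gives $c$ on all such diagonal matrices, so this is a one-line fix. Finally, the claim that $U$ is a countable \emph{disjoint} union of ``good'' balls needs the ultrametric dichotomy (two balls are nested or disjoint) plus the fact that a ball minus finitely many sub-balls is again a finite union of balls; alternatively you can skip it entirely by noting that the local identity, being an equality of Borel measures on each small ball, holds for arbitrary measurable subsets, so an ordinary disjointification $C_i=B_i\setminus(B_1\cup\dots\cup B_{i-1})$ of any countable subcover suffices. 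With these adjustments the proof is complete and agrees with the argument in the references the paper cites.
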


\section{Proof of Theorem \ref{main result}}

In this section we give the proof of Theorem \ref{main result}. The strategy of the proof goes as follows:
\begin{enumerate}
\item A multiple of $L$ induces a rational map $\phi\colon B\rato \Pj^N$ which is birational onto its image; since $f_B$ preserves $L$, it induces a linear automorphism $g\in \Aut(\Pj^N(\C))=\PGL_{N+1}(\C)$ which preserves $\overline{\phi(B)}$.
\item Find an $f$-invariant volume form $\omega$ on $X$ (for $X$ irreducible symplectic, $\omega=(\sigma \wedge \bar \sigma)^n$,  where $2n=\dim X$ and $\sigma$ is a symplectic form).
\item The push-forward of $\omega$ by $\pi$ defines a $f_B$-invariant measure $\mu$ on $B$ not charging positive codimensional subvarieties; using this it is not hard to put \linebreak $g\in \PGL_{N+1}(\C)$ in diagonal form with only complex numbers of modulus $1$ on the diagonal.
\item Define the field of coefficients $k$: roughly speaking, a finitely generated (but not necessarily finite) extension of $\Q$ over which $X$, $B$, $f$, the volume form and all the relevant maps are defined.
\item Apply a key lemma: if one of the coefficients $\alpha$ of $g$ weren't a root of unity, there would exist an embedding $k\hookrightarrow K$ into a local field $K$ such that $|\rho(\alpha)|\neq 1$. Then the same measure-theoretic argument as in point $(3)$ leads to a contradiction.\\
A similar idea appears in the proof of Tits alternative for linear groups, see \cite{MR0286898}.
\end{enumerate}

\subsection{Invariant volume form on $X$}

Given a holomorphic $n$-form $\Omega$ ($n$ being the dimension of $X$), the pull-back $f^*\Omega$ is defined outside the indeterminacy locus of $f$; the latter being of codimension $\geq 2$, by Hartogs principle we can extend $f^*\Omega$ to an $n$-form on the whole $X$. This action determines a linear automorphism
$$f^*\colon H^0(X,K_X)\to H^0(X,K_X).$$
Similarly, for all $m\geq 0$ one can define linear automorphisms
$$f_m^*\colon H^0(X,mK_X)\to H^0(X,mK_X).$$
Since $X$ has non-negative Kodaira dimension, there exists $m>0$ such that $mK_X$ has a non-trivial section $\Omega$. By the finiteness of the pluricanonical representation (see \cite{MR0506253}), some finite index subgroup of the group $\Bir(X)$ of birational transformations of $X$ has trivial action on $H^0(X,mK_X)$; in particular, up to replacing $f$ by one of its iterates we may suppose that $f^*_m$ is the identity, so that in particular
$$f_m^*\Omega=\Omega.$$

The section $\Omega$ can be written in local holomorphic coordinates $x_1,\ldots ,x_n$ as
$$\Omega=a(x)(dx_1\wedge \ldots \wedge dx_n)^{\otimes m}$$
for some (local) holomorphic function $a$. 
Thus locally
$$\Omega \wedge \overline \Omega = \left| a(x)\right|^2 (dx_1\wedge \ldots \wedge dx_n)^{\otimes m} \wedge (d\bar x_1 \wedge \ldots \wedge d\bar x_n)^{\otimes m}.$$
It can be checked that the local form
$$\omega=\frac {(-1)^{n(n-1)/2}}{i^n}\sqrt[m]{\left| a(x)\right | ^2} dx_1\wedge \ldots \wedge dx_n \wedge d\bar x_1\wedge \ldots \wedge d\bar x_n$$
is a volume form outside the zeros of $\Omega$; since it is canonically associated to $\Omega$, such local expressions glue together to define a volume form on $X$
$$\omega=\frac {(-1)^{n(n-1)/2}}{i^n} \sqrt[m]{\Omega \wedge \overline \Omega}.$$

The push-forward by $\pi$ induces a measure $\mu$ on $B$: for all Borel set $A\subset B$, we set
$$\mu (A):=\int_{\pi^{-1}(A)}\omega;$$
here, if $U\subset X$ is a Zariski-open subset where $\pi$ is well defined, we denote by $\pi\inv(A)$ the set $\pi|_U\inv (A\cap \pi(U))\subset U$. Since $\omega$ is a volume form (and therefore doesn't charge positive codimensional subvarieties), the definition of $\mu$ is independent on the choice of the Zariski-open subset $U$; furthermore, $\mu$ doesn't charge positive codimensional subvarieties. The measure $\mu$ is $f_B$-invariant:
$$(f_B)_*\mu (A)=\mu (f_B\inv A)=\omega(\pi\inv f_B\inv A)=\omega(f\inv \pi\inv A)=\omega(\pi\inv A)=\mu(A).$$

\subsection{A first reduction of $f_B$}
\label{first reduction}

From now on we fix a multiple $L^{\otimes h}$ of $L$ such that the induced Kodaira-Iitaka map
$$\phi=\Phi_{hL}\colon B\rato \Pj H^0(B,L^{\otimes h})^\vee \cong \Pj^N$$
is birational onto its image; let $B_0=\overline {\phi(B)}$ and let $g\in \Aut(\Pj^N)=\PGL_{N+1}(\C)$ be the linear automorphism induced by $f_B$. Then $B_0$ is a (possibly singular) $g$-invariant variety and the measure $\mu$ on $B$ induces by push-forward a $g$-invariant measure on the image $\phi(U)$ of a Zariski-open subset $U\subset B$ where $\phi$ is we-defined; such measure can be extended to a $g$-invariant measure $\mu_0$ on $B_0$ by defining $\mu_0(S) = \phi_*\mu(S \cap \phi(U))$. Since $\mu$ doesn't charge positive codimensional subvarieties, neither does $\mu_0$.\\
In a given system of homogeneous coordinates on $\Pj^N$, an automorphism $g\in \Aut(\Pj^N)=\PGL_{N+1}(\C)$ is represented by a matrix $M$ acting linearly on such coordinates; $M$ is well-defined up to scalar multiplication. We will say that $g$ is semi-simple if $M$ is; in this case there exist homogeneous coordinates $Y_0,\ldots, Y_n$ such that the action of $g$ on these coordinates can be written
$$g([Y_0:\ldots :Y_N])=
\left[
\begin{array}{cccccccc}
1  \\
& \alpha_1 &\\
&&\ddots \\
&&& \alpha_N\end{array}
\right]
\underline Y=[Y_0:\alpha_1 Y_1:\ldots : \alpha_NY_N].$$
By an abuse of terminology, we will call the $\alpha_i$ the eigenvalues of $g$; because of the arbitrary choice of a homogeneous coordinate for which the corresponding diagonal element is equal to $1$, they are not well-defined. However, the property that they are all of modulus $1$ is: indeed, a different choice of a homogeneous coordinate, e.g. the $i$-th, simply divides the eigenvalues by $\alpha_i$.

\begin{lemma}
\label{lemma: first reduction}
The automorphism $g$ is semi-simple and its eigenvalues have all modulus $1$.
\end{lemma}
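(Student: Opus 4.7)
The plan is to exploit the $g$-invariance of the finite measure $\vol_0$ on $B_0$, together with the fact that $\vol_0$ does not charge subvarieties of positive codimension, to rule out every form of projective contraction under the dynamics of $g$. The intuition is that if the conclusion fails, then iterating $g$ pushes mass arbitrarily close to a proper linear section of $B_0$, contradicting invariance.

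First I would pick a representative $M \in \mathrm{GL}_{N+1}(\C)$ of $g$ and study the rescaled iterates $A_n := M^n/\|M^n\|$. By compactness of the closed unit ball in matrix space, a subsequence $A_{n_k}$ converges to some matrix $P$ of norm $1$. A routine Jordan-normal-form computation shows that $P$ is invertible \emph{only} when $M$ is semisimple and all of its eigenvalues share the same modulus; in every other case, either the generalized eigenspaces attached to eigenvalues of smaller modulus get crushed under rescaling, or a non-trivial Jordan block forces polynomial growth that dominates the diagonal part, and in both scenarios $\operatorname{rank}(P) < N+1$.

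Assume for contradiction that the conclusion of the lemma fails. Then $P$ has proper kernel and image, inducing proper linear subspaces $K, V \subsetneq \Pj^N$ with the property that $g^{n_k}(x) \to [P(x)] \in V$ for every $x \notin K$. Because $\phi$ is defined by the complete linear system $|hL|$, the image $B_0$ is non-degenerate in $\Pj^N$; consequently $B_0 \cap K$ and $B_0 \cap V$ are proper closed subvarieties of $B_0$, and in particular have $\vol_0$-measure zero.

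To reach the contradiction, fix an open neighborhood $U \subset B_0$ of $V \cap B_0$. For every $x$ in the full-measure set $B_0 \setminus K$ one has $g^{n_k}(x) \in U$ for all sufficiently large $k$, so $\chi_U \circ g^{n_k} \to 1$ pointwise on this set. Dominated convergence (using $\vol_0(B_0) < \infty$) together with $g$-invariance of $\vol_0$ then yields
$$\vol_0(B_0) = \lim_k \int_{B_0} \chi_U\bigl(g^{n_k}(x)\bigr)\, d\vol_0(x) = \vol_0(U).$$
Letting $U$ shrink down to $V \cap B_0$ gives $\vol_0(V \cap B_0) = \vol_0(B_0) > 0$, contradicting the hypothesis that $\vol_0$ does not charge proper subvarieties. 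The main subtlety I expect to handle carefully is the Jordan-form analysis pinning down exactly when $P$ is rank-deficient; once that is in place, the measure-theoretic argument is essentially forced.
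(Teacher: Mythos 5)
Your proof is correct, but it takes a genuinely different route from the paper. The paper argues in an affine chart with an explicit fundamental-domain computation: if $g$ had a nontrivial Jordan block, the coordinate $y_1$ would be translated by $1$ and the strip $\{0\leq \re(y_1)<1\}$ would tile $\C^N$ under the iterates of $g$; if some $|\alpha_1|\neq 1$, the annulus $\{1\leq |y_1|<|\alpha_1|\}$ plays the same role; in both cases the $g$-invariance of $\vol_0$ forces the total mass to be $0$ or $+\infty$, contradicting $0<\vol_0(B_0)<\infty$. You instead run a Furstenberg-type compactness argument: the normalized powers $M^{n}/\|M^{n}\|$ subconverge to a matrix $P$ which is singular exactly when the conclusion fails (your Jordan-form dichotomy is correct, and note that ``all eigenvalues of $M$ of equal modulus'' is indeed the $\PGL$-invariant reformulation of the lemma's ``$|\alpha_i|=1$ after normalization''), so that $g^{n_k}$ pushes $\vol_0$-almost every point into any neighborhood of the proper linear section $V\cap B_0$; invariance plus dominated convergence and continuity from above then make $\vol_0$ charge $V\cap B_0$, contradicting the non-charging hypothesis. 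Your appeal to non-degeneracy of $B_0$ (valid because $\phi$ is given by the complete linear system $|hL|$, so no hyperplane, hence no proper linear subspace, contains $B_0$) is exactly the point needed to ensure $B_0\cap K$ and $B_0\cap V$ are proper subvarieties, and the remaining measure-theoretic steps are sound since $\vol_0$ is finite, nonzero, $g$-invariant and kills proper subvarieties. What each approach buys: yours treats semi-simplicity and the modulus condition in a single stroke and is essentially the classical lemma that a projective transformation preserving a finite measure not charging proper subspaces is elliptic; the paper's tiling argument is more elementary (no limit matrices, no regularity of the measure beyond countable additivity) and, importantly for this paper, is the form that is recycled verbatim over a non-archimedean local field in the proof of Theorem~\ref{main result}, where $g^\rho$ is already diagonal and the annulus argument applies with $|\cdot|_K$ in place of the complex modulus.
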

\begin{proof}
Let us prove first that $g$ is semi-simple. If this were not the case, the Jordan form of $g$ (which is well-defined up to scalar multiplication) would have a non-trivial Jordan block, say of dimension $k\geq 2$. We will consider the lower triangular Jordan form. In some good homogeneous coordinates $Y_0,\ldots Y_n$ of $\Pj^N$, after rescaling the coefficients of $g$ we can write
$$g(\underline Y)=
\left[
\begin{array}{cccccccc}
1  &&&\bf{0}\\
1 & 1 &\\
&\ddots&\ddots &&&\mathlarger{\mathlarger{\bf{0}}}\\
\bf{0}&&1& 1\\
&&&&\alpha_k&& \bf{0} \\
&\mathlarger{\mathlarger{\bf{0}}}&&&&\ddots\\
&&&&\bigstar&&\alpha_N
\end{array}
\right]
\underline Y.
$$
Take the affine chart $\{Y_0\neq 0\}\cong \C^N$ with the induced affine coordinates $y_i=Y_i/Y_0$. In these coordinates we can write
$$g(y_1,\ldots ,y_N)=(y_1+1,\ldots)$$
and thus
$$g^k(y_1,\ldots , y_N)=(y_1+k,\ldots).$$
\begin{minipage}{0.6\textwidth}
Let 
$$A=S\times \C^{N-1}, \qquad S=\{y_1\in \C \,|\, 0\leq \re(y_1)<1\};$$
 then we have
$$\C^N=\coprod_{k\in \Z} g^k(A).$$
\end{minipage}
\begin{minipage}{0.05\textwidth}\hfill\end{minipage}
\begin{minipage}{0.3\textwidth}
\begin{center}
\begin{tikzpicture}[x=0.5cm,y=0.5cm]
\fill[blue!20!white] (0,-3) rectangle (2,4);
\draw[very thick,->] (0,-3) -- (0,4) node[anchor= east] {Im};
\draw[very thick,->] (-3,0) -- (5,0) node[anchor=north] {Re};
\foreach \x in {-2,2,4}
	\draw[gray,thin] (\x,-3) -- (\x,4);
\draw[very thick, red, ->] (1,-2) -- (3,-2); 
\draw[very thick, red, ->] (1,2) -- (3,2) node[anchor=south east] {$g$};
\draw node [anchor=north, blue] at (1,0) {$S$};
\fill (0,0) circle [radius=2pt];
\fill (2,0) circle [radius=2pt];
\draw node [anchor=south east] at (0,0) {$0$};
\draw node [anchor=south west] at (2,0) {$1$};
\draw node [anchor=north] at (5,4) {$\C$};
\end{tikzpicture}
\end{center}\end{minipage}

Therefore
$$B_0\cap \C^N=\coprod_{k\in \Z} B_0\cap g^k(A)=\coprod_{k\in \Z} g^k(A\cap B_0).$$
We have $B_0 \cap \C^N \neq \emptyset$: indeed, if this were not the case, then there would exist a non-trivial section $s \in H^0(B,L^{\otimes h})$ such that $s(b) = 0$ for all $b\in B$, contradicting the non-triviality. Since $\mu_0$ doesn't charge positive-codimension subvarieties, we have 
\begin{eqnarray*}\mu(B)=\mu_0(B_0)=\mu_0 (B_0\cap \C^N)=\sum _{k\in \Z} \mu_0 (g^k(A\cap B_0))\\
=\sum _{k\in \Z} \mu_0 (A\cap B_0)=0 \text{ or }+\infty,
\end{eqnarray*}
which is a contradiction with the finiteness of $\mu$. This shows that $g$ is diagonalizable.

Next we show that, up to rescaling, in good homogeneous coordinates one can write
$$g(\underline Y)=
\left[
\begin{array}{cccccccc}
1  \\
& \alpha_1 &\\
&&\ddots \\
&&& \alpha_N\end{array}
\right]
\underline Y=[Y_0:\alpha_1 Y_1:\ldots : \alpha_NY_N]$$
with $|\alpha_i|=1$.\\
\begin{minipage}{0.6\textwidth}
Suppose by contradiction that $|\alpha_1|\neq 1$ (for example $|\alpha_1|>1$), and define 
$$A'=S'\times \C^{N-1} \qquad S'=\{ y_1\in \C\, |\, 1\leq |y_1|< |\alpha_1|\}.$$
\end{minipage}
\begin{minipage}{0.05\textwidth}\hfill\end{minipage}
\begin{minipage}{0.3\textwidth}
\begin{tikzpicture}[x=0.4cm,y=0.4cm]
\fill[fill=blue!20!white]
	(0,0) circle (2);
\fill[fill=white]
	(0,0) circle (1); 
\draw[very thick,->] (0,-4.5) -- (0,4.5) node[anchor= east] {Im};
\draw[very thick,->] (-4.5,0) -- (4.5,0) node[anchor=north] {Re};
\draw node [anchor=north] at (5,4) {$\C$};
\foreach \x in {.5,1,2,4}
	\draw (0,0) circle (\x);
\draw node [anchor=south west, blue] at (0,-2) {$S'$};
\draw[very thick, red, ->] (1,1) -- (2,2) node[anchor=south east] {$g$};
\draw[very thick, red, ->] (1,-1) -- (2,-2);
\draw[very thick, red, ->] (-1,1) -- (-2,2);
\draw[very thick, red, ->] (-1,-1) -- (-2,-2);
\fill (1,0) circle [radius=2pt];
\fill (2,0) circle [radius=2pt];
\draw node [anchor=south west] at (1,0) {$1$};
\draw node [anchor=south west] at (2,0) {$|\alpha|$};
\end{tikzpicture}
\end{minipage}
\\
The same argument as above leads to a contradiction.
\end{proof}

\subsection{The field of coefficients}

A key idea of the proof will be to define the "smallest" extension $k$ of $\Q$ over which $X$, $B$ and all the relevant applications are defined, and to embed $k$ in a local field in such a way as to obtain a contradiction.

Let us fix a cover of $X$ by affine charts $U_1,\ldots ,U_m$ trivializing the canonical bundle. Each of these $U_i$ is isomorphic to the zero locus of some polynomials $p_{i,1},\ldots, p_{i,n_i}$ in an affine space $\C^{N_i}$; fix some rational functions $g_{i,j}\colon \C^{N_i}\rato \C^{N_j}$ giving the changes of coordinates from $U_i$ to $U_j$.
 Let $f\colon \C^{N_i}\rato \C^{N_j}$ (resp. $\Omega_i\colon \C^{N_i}\rato \C$,) be some rational functions defining $f$ (resp. $\Omega$). Analogously, fix affine charts for $B$ trivializing $L$, local equations of $B$, change of coordinates for $L$ and rational functions locally defining $f_B\colon B\rato B$ and $\phi\colon B\rato \Pj^N$; here, we will fix homogeneous coordinates on $\Pj^N$ diagonalizing $g$ (see paragraph \ref{first reduction}). Finally, we fix equations of a resolution of indeterminacies of $f_B$ as follows
 $$
\begin{tikzcd}
\widetilde{B} \arrow[rightarrow,swap]{d}{\eta} \arrow[rightarrow]{dr}{\widetilde f_B}  \\
B \arrow[dashrightarrow]{r}{f_B}& B
\end{tikzcd}.
$$
 such that $\eta^*L=\widetilde{f}_B^* L$ (such a resolution exists by the assumption that $f_B$ preserves $L$).\\
We define the \emph{field of coefficients} $k=k_\Omega$ as the extension of $\Q$ generated by all the coefficients appearing in the $p_{i,k},f_{i,j},g_{i,j},h_{i,j}, \Omega_i, \pi_i$, in the equations of $B$, $f_B$, $\phi$, $\widetilde B$, $\eta$ and $\widetilde f_B$, and by the coefficients $\alpha_1,\ldots ,\alpha_N$ of $g$; this is a finitely generated (but not necessarily finite) extension of $\Q$ over which $X$, $B$ and all the relevant functions are defined.

Let $\rho\colon k\hookrightarrow K$ be an embedding of $k$ into a local field $K$; since $\R$ is naturally embedded in $\C$, we may and will assume that $K$ is either $\C$ or a $p$-adic field. We can now apply a base change in the sense of algebraic geometry to recover a smooth projective scheme over $K$ and all the relevant functions.

Here are the details of the construction: the polynomials $p_{i,k}^\rho=\rho(p_{i,k})$ define affine varieties $X_i^\rho$ of $K^{N_i}$; the rational functions $g_{i,j}^\rho$ allow to glue the $X_i^\rho$-s into a complex algebraic variety $X^\rho$. This variety is actually smooth since smoothness is a local condition which is algebraic in the coefficients of the $p_{i,k}$. Furthermore, by applying $\rho$ to all the relevant rational functions, we can recover a birational transformation $f^\rho\colon X^\rho \rato X^\rho$ and a pluricanonical section $\Omega^\rho\in H^0(X^\rho, mK_{X^\rho})$. Note that we can suppose that $X^\rho$ is projective: indeed, $X\subset \Pj^M(\C)$ is the zero locus of some homogeneous polynomials $P_1,\ldots ,P_k\in \C[Y_0,\ldots,Y_M]$, and, up to adding the affine open subsets $X_i=X\cap \{Y_i\neq 0\}$ to the above constructions, it is easy to see that $X^\rho\subset \Pj^N(K)$ is the zero locus of $P_1^\rho, \ldots, P_k^\rho$.
Analogously, applying $\rho$ to all the relevant equations allows to define a (smooth projective) variety $B^\rho$, a dominant rational map $\pi^\rho\colon X^\rho\dashrightarrow B^\rho$ and a birational transformation $f_B^\rho \colon B^\rho \rato B^\rho$ such that $\pi^\rho\circ f^\rho=f_B^\rho\circ \pi^\rho$:\\
 \centerline{\xymatrix{
X^\rho \ar@{-->}[d]^{\pi^\rho} \ar@{-->}[r]^{f^\rho} & X^\rho \ar@{-->}[d]^{\pi^\rho}\\
B^\rho \ar@{-->}[r]^{f_B^\rho} & B^\rho
}}
Furthermore, we get a line bundle $L^\rho$ which is $f_B^\rho$-invariant (meaning that there exists a resolution of indeterminacies of $f^\rho$, namely $\widetilde B^\rho$, such that $\eta^{\rho*}L^\rho=\widetilde f_B^{\rho*}$), and a rational map $\phi^\rho\colon B^\rho \rato \Pj^N_K$, constructed as the Kodaira-Iitaka map of $hL^\rho$, which is birational onto its image; denoting by $g^\rho\colon \Pj^N_K\to \Pj^N_K$ the automorphism given by $g^\rho[Y_0:\ldots:Y_n]=[Y_0:\alpha_1^\rho Y_1:\ldots :\alpha_N^\rho Y_n]$, $g^\rho$ preserves $B_0^\rho=\overline{\phi^\rho(B^\rho)}$, and $g^\rho|_{B_0^\rho}$ identifies with $f_B^\rho$.

We will denote by $\omega^\rho$ the measure on $X^\rho$ associated to $\Omega^\rho$:  this has been denoted by $\sqrt[m]{|\Omega^\rho|}$ in section \ref{integration} in the non-archimedean case, while if $K=\C$ it is defined as the measure of integration of 
$$\frac {(-1)^{n(n-1)/2}}{i^n} \sqrt[m]{\Omega^\rho \wedge \overline{\Omega^\rho}}.$$
In both cases, $\omega^\rho$ doesn't charge positive codimensional analytic subvarieties.

\begin{rem}
\label{if algebraic integer}
At this stage we can already prove that the $\alpha_i$ are algebraic numbers all of whose conjugates over $\Q$ have modulus $1$. Indeed suppose that this is not the case, say for $\alpha_1$; by a standard argument in Galois theory (see for example \cite{MR1878556}), one can find an embedding $\rho \colon k\hookrightarrow \C$ such that $|\rho(\alpha_1)|\neq 1$. Now, $g^\rho$ preserves the measure $\mu^\rho$ induced on $\Pj^n$ by $\omega^\rho$
$$\mu^\rho(A):=\int_{(\pi^\rho)\inv (A)}\omega^\rho,$$
and Lemma \ref{lemma: first reduction} leads to a contradiction.

If we somehow knew that the $\alpha_i$ are algebraic integers, we could conclude by a lemma of Kronecker's (see \cite{MR1578994}) that they are roots of unity. However, this is in general not true for algebraic numbers: for example, 
$$\alpha=\frac {3+4i}5$$
has only $\bar\alpha$ as a conjugate over $\Q$, and they both have modulus $1$, but they are not roots of unity. In order to exclude this case we will have to use the $p$-adic argument.
\end{rem}

\subsection{Key lemma and conclusion}

In his original proof of the Tits alternative for linear groups \cite{MR0286898}, Tits proved and used (much like we do in this context) the following simple but crucial lemma:

\begin{lemma}[Key lemma]
\label{embedding}
Let $k$ be a finitely generated extension of $\Q$ and let $\alpha\in k$ be an element which is not a root of unity. Then there exist a local field $K$ (with norm $|\cdot |$) and an embedding $\rho \colon k \hookrightarrow K$ such that $|\rho(\alpha)|>1$.
\end{lemma}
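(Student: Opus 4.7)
The plan is to produce the required embedding in two stages: first build an embedding $\rho_0\colon \Q(\alpha)\hookrightarrow K_0$ into a local field $K_0$ with $|\rho_0(\alpha)|>1$, and then extend $\rho_0$ to all of $k$. For the extension I will exploit that $k/\Q(\alpha)$ is finitely generated, hence has finite transcendence degree, and that any local field of characteristic zero has uncountable transcendence degree over its prime field, since the field itself is uncountable while $\Q$ is countable.

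For the first stage I split into cases according to whether $\alpha$ is transcendental or algebraic over $\Q$. If $\alpha$ is transcendental, I send it to any complex number of modulus greater than $1$; this defines the required $\rho_0\colon \Q(\alpha)\hookrightarrow \C$. If $\alpha$ is algebraic over $\Q$ but not a root of unity, I invoke Kronecker's theorem exactly as in Remark~\ref{if algebraic integer}: a nonzero algebraic integer whose Galois conjugates all have absolute value $\leq 1$ must be a root of unity. So either some Galois conjugate $\sigma(\alpha)$ satisfies $|\sigma(\alpha)|>1$, yielding an archimedean embedding $\Q(\alpha)\hookrightarrow \C$ of the desired form; or $\alpha$ is not an algebraic integer, in which case some non-archimedean place $v$ of $\Q(\alpha)$ (lying over a rational prime $p$) satisfies $|\alpha|_v>1$, and the completion of $\Q(\alpha)$ at $v$, a finite extension $K_0$ of $\Q_p$, provides the required embedding $\Q(\alpha)\hookrightarrow K_0$.

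For the second stage, I choose a transcendence basis $t_1,\ldots,t_s$ of $k$ over $\Q(\alpha)$. Because $K_0$ has uncountable transcendence degree over the countable field $\Q(\alpha)$, I may send each $t_i$ to an element of $K_0$ so that the images are algebraically independent over $\rho_0(\Q(\alpha))$; this extends $\rho_0$ to an embedding of $\Q(\alpha)(t_1,\ldots,t_s)$ into $K_0$. Since $k$ is a finite algebraic extension of $\Q(\alpha)(t_1,\ldots,t_s)$, the embedding extends further inside the algebraic closure $\overline{K_0}$, and the image of $k$ is contained in some finite extension $K$ of $K_0$. Finite extensions of local fields of characteristic zero carry a unique extension of the norm and are themselves local fields, so $K$ is the local field we need and $|\rho(\alpha)|_K=|\rho_0(\alpha)|_{K_0}>1$.

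I expect the main obstacle to be the non-archimedean branch of the algebraic case: as the example $\alpha=(3+4i)/5$ from Remark~\ref{if algebraic integer} illustrates, archimedean embeddings alone cannot suffice, so one genuinely needs to extract a finite place of $\Q(\alpha)$ at which $|\alpha|>1$. Once such a place is found, the rest is essentially bookkeeping with transcendence bases and the standard fact that finite extensions of local fields remain local.
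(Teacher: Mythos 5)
Your proof is correct, and note that the paper itself does not prove Lemma \ref{embedding}: it quotes it from Tits' paper on the Tits alternative \cite{MR0286898}, so there is no internal argument to compare against. Your two-stage argument --- (i) for algebraic $\alpha$, Kronecker's theorem if $\alpha$ is an algebraic integer, and otherwise a finite place $v$ of the number field $\Q(\alpha)$ with $|\alpha|_v>1$, giving an embedding into $\C$ or into the $p$-adic completion; (ii) extension to all of $k$ by sending a transcendence basis to elements of the uncountable local field $K_0$ that are algebraically independent over the countable image of $\Q(\alpha)$, then passing to a finite extension of $K_0$, which is again local and carries a unique extension of the norm restricting (up to harmless normalization) to the old one --- is the standard characteristic-zero proof and is complete for the statement as given, since $k$ is a finitely generated extension of $\Q$. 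Tits' lemma is more general (arbitrary finitely generated fields, including positive characteristic), and his proof organizes the extension step by extending places/absolute values along the field extension rather than by your cardinality-of-local-fields trick; in characteristic zero your route buys a self-contained and elementary argument, which is all this paper needs. Two small points to tighten: in the transcendental case you must send $\alpha$ to a \emph{transcendental} complex number of modulus $>1$ (an arbitrary complex number of modulus $>1$ need not define a field embedding of $\Q(\alpha)\cong\Q(t)$, e.g.\ $\alpha\mapsto 2$ does not extend to $1/(\alpha-2)$), and, as in Tits' formulation, the statement implicitly assumes $\alpha\neq 0$ --- your Kronecker step correctly requires nonvanishing, and the lemma is vacuously false for $\alpha=0$, which is harmless here since $\alpha$ is an eigenvalue of an automorphism.
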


\begin{proof}[Proof of Theorem \ref{main result}]
Suppose by contradiction that one of the eigenvalues, say $\alpha_1$, is not a root of unity and define the field of coefficients $k$. By Lemma \ref{embedding}, there exists a local field $K$ and an embedding $\rho \colon k \hookrightarrow K$ such that $|\rho(\alpha_1)|_K>1$.

Now, since $f^*\Omega=\Omega$, we have $(f^\rho)^*(\Omega^\rho)=\Omega^\rho$, and in particular $f_B^\rho$ preserves the measure $\mu^\rho$ on $B^\rho$ induced by the push-forward of $\omega_\rho$:
$$\mu^\rho(A):=\omega^\rho\left((\pi^\rho)\inv (A)\right).$$
The measure $\mu^\rho$ is non-trivial, finite, and doesn't charge positive codimensional analytic subvarieties of $B^\rho$, thus we can conclude just as in the proof of Lemma \ref{lemma: first reduction}.

Namely, let $\mu_0^\rho$ be the measure on $B_0^\rho$ induced by $\mu^\rho$, identify $\{Y_0\neq 0\}\subset \Pj_K^N$ with $K^N$ and let 
$$A:=\{(y_1,\ldots,y_N)\in K^N\,|\,1\leq |y_1|<|\alpha_1|\}\subset K^N;$$ 
 then, since $\mu^\rho$ doesn't charge positive codimension analytic subsets, we have
\begin{eqnarray*}\omega^\rho(X^\rho)=\mu^\rho(B^\rho)=\mu_0^\rho(B_0^\rho \cap K^n)\\
=\sum_{N\in \Z} \mu_0^\rho((g^\rho)^N(A\cap B_0^\rho))=\sum _{N\in \Z}\mu_0^\rho(A \cap B_0^\rho)=0\text{ or }+\infty,
\end{eqnarray*}
a contradiction.
\end{proof}

\bibliography{references}{}
\bibliographystyle{alpha}

\end{document}